\pgfplotsset{compat=1.18} 
\theoremstyle{plain}
\newtheorem{thm}{Theorem}[section]
\newtheorem{prop}[thm]{Proposition}
\newtheorem{lem}[thm]{Lemma}
\newtheorem*{thm*}{Theorem}
\theoremstyle{definition} 
\newtheorem{defn}[thm]{Definition}
\newtheorem*{rmk}{Remark}
\Crefname{thm}{Theorem}{Theorems}
\Crefname{prop}{Proposition}{Propositions}
\Crefname{lem}{Lemma}{Lemmas}
\Crefname{cor}{Corollary}{Corollaries}
\Crefname{defn}{Definition}{Definitions}
\Crefname{tab}{Table}{Tables}
\Crefname{ex}{Example}{Examples}
\Crefname{chap}{Champter}{Chapters}
\Crefname{app}{Appendix}{Appendices}
\newcommand{\ra}{\rightarrow}
\newcommand{\ot}{\otimes}
\newcommand{\homgrp}[2]{\mathrm{Hom}(#1,#2)}
\newcommand{\Hom}{\mathrm{Hom}}
\newcommand{\ignore}[1]{}
\newcommand{\op}{\mathrm{op}}
\newcommand{\cee}{\mathcal{C}}
\newcommand{\stau}{*-autonomous}
\newcommand{\eps}{\varepsilon}
\newcommand{\gv}{Grothendieck-Verdier}
\title{\textbf{
Hopf algebroids and Grothendieck-Verdier duality
}}
\author{Robert Allen}
\affil{\small{School of Mathematics, University of Bristol, Bristol, BS8 1TW, UK, \\ Heilbronn Institute for Mathematical Research, Bristol, UK.}}
\begin{document}

\maketitle

\begin{abstract}
    Grothendieck-Verdier duality is a powerful and ubiquitous structure on monoidal categories, which generalises the notion of rigidity. Hopf algebroids are a generalisation of Hopf algebras, to a non-commutative base ring. Just as the category of finite-dimensional modules over a Hopf algebra inherits rigidity from the category of vector spaces, we show that the category of finite-dimensional modules over a Hopf algebroid with bijective antipode inherits a Grothendieck-Verdier structure from the category of bimodules over its base algebra. We investigate the algebraic and categorical structure of this duality.
\end{abstract}

\section{Introduction}

Hopf algebras are defined precisely in such a way that the category of their finite-dimensional modules inherits rigidity from the category of vector spaces. The rigid dual module structure comes from the antipode map. Hopf algebroids are generalisations of Hopf algebras, defined in the category of bimodules over some algebra $A$ over a field $k$. While the category of $A$-$A$-bimodules is not rigid in general, it is natural to ask if the category of Hopf algebroid
modules that are finite-dimensional as $k$-vector spaces
inherits some duality structure from this underlying category. We show that this is correct, and the duality structure in question is \gv{}, also called non-symmetric \stau{}, which is a generalisation of rigidity. The crucial step in this process is to determine the dualising object, for which the vector space dual of the base algebra is a natural choice. We show that the antipode map equips this space with an action of the Hopf algebroid, and that the corresponding dualising functor is an anti-equivalence, hence endowing the category of modules with a \gv{} structure.

\subsection{Hopf algebroids}

Hopf algebroids are generalisations of Hopf algebras, with a noncommutative base algebra $A$. Recently, they have found applications in quantum gravity \cite{LMM+18}, noncommutative geometry \cite{Gho20}, differential equations \cite{KG21} and Hopf-Galois extensions \cite{HM23}, to name a few.
While there is an accepted definition of bialgebroid, there are different definitions of Hopf algebroid, and we work with the definition from \cite[Definition 4.1]{BS04}, which has been shown to be equivalent to the definition from \cite[Section 11]{DS04}, by \cite{BS04}[Theorem 4.7]. We call this a full Hopf algebroid, or sometimes just a Hopf algebroid (with bijective antipode), which is a special case of a left Hopf algebroid, or $\times_A$-Hopf algebra, as defined in \cite{Sch00}. In general, the category of finite-dimensional modules over a Hopf algebroid is not rigid. If a rigid category is desired, then it is typical to consider the category of modules which are finitely generated projective as left (or right) modules over the base algebra, when each module $M$ has a rigid dual given by $\Hom_A(M, A)$ 
\cite[Proposition 4.41]{Boh09}.

\subsection{Grothendieck-Verdier categories}

\gv{} categories are closed monoidal categories with a duality structure that generalises rigidity \cite{Bar95, BD13}. They are also known as non-symmetric \stau{}, or just \stau{}, and we will use these terms interchangeably. Many of the results associated with rigid categories have analogues in the \gv{} setting. For example, the Eilenberg-Watts theorem \cite[Lemma 3.7]{FSSW23}. \gv{} categories arise in a broad range of mathematical fields \cite{HS03}. For example, they have found applications in logic \cite{M09}, quantum theory \cite{Day11}, algebra \cite{Man17}, conformal field theory \cite{ALSW21} and algebraic topology \cite{MW22} and finite tensor categories \cite{MW23}.

\subsection{Known results}

The connections between Hopf algebroids and \gv{} categories have been noticed before. In \cite{DS04}[Example 7.4], Hopf algebroids are characterised by a ``strong \stau{} structure'' on an opmorphism between pseudomonoids in the bicategory of algebras, bimodules and bimodule homomorphisms. 
This gives rise to a strong monoidal and strong closed functor from the category of Hopf algebroid modules to the category of bimodules over the base algebra \cite{DS04}[Section 11]. However, it is not clear that this is sufficient to lift the duality structure, and further this definition of Hopf algebroid, although equivalent, is presented quite differently to the more common algebraic definition and so would provide little insight on the explicit structure of the duality.

It is also known that a \stau{} monad lifts the \stau{} structure to its category of algebras \cite{Pas12, PS09}. 
For a Hopf algebroid, the underlying category is that of bimodules over a finite-dimensional algebra, which is \stau{} \cite[Remark 3.17]{FSS19}, \cite[(2.22)]{FSSW23}. It remains to show that Hopf algebroids are in fact \stau{} monads. Hopf algebroids are known to be examples of Hopf monads \cite{BLV11}, and a sufficient condition for a Hopf monad to be a \stau{} monad is that the dualising object has an algebra structure \cite{HL18}. This can be established for a Hopf algebroid with invertible antipode, as we will show. 
However, the main result of \cite{HL18} relies upon a theorem which claims that ``the notions of linear distributive categories with negation and \stau{} categories coincide", for which the references \cite{CS97, Pas12} are provided. \cite[Theorem 4.5]{CS97} claims to prove this result in the symmetric case, but a large part of the proof is left ``to the faith of the reader". In fact, obtaining the distributors turns out to be surprisingly subtle, see \cite{FSSW23}. 

 Inspired by these connections, 
 we believe it is valuable to provide an explicit and self-contained proof, which leaves no room for doubt.

\subsection{Outline}

In \cref{sec:hopfalg}, we introduce the definitions of bialgebroid and Hopf algebroid (with bijective antipode) and collect some useful consequences and remarks.

In \cref{sec:closed}, we exhibit the closed structure of the category of modules over a Hopf algebroid, and express the internal Hom functors in terms of the internal Hom functors of the base algebra. We write down the action of the Hopf algebroid on these internal Hom spaces.

In \cref{sec:dual} we define a Hopf algebroid module structure on the dual of the base algebra and relate the vector space dual of a module to the internal Hom functors to this module. We state a definition of \gv{} category, and finally we prove that the category of finite-dimensional modules over a Hopf algebroid with bijective antipode is a \gv{} category, where a natural choice of dualising object is the dual of the base algebra.  

Usually it will be clear from the context in which set an element takes its value. We tend to use $H$ for the Hopf algebroid  (total algebra) and $A$ for the base algebra, $M$ for an $H$-module and $M^*$ for its $k$-linear dual. Then the elements are usually denoted $h, k \in H$, $a,b,c \in A$, $m \in M$ and $f \in M^*$. 

 \section{Hopf algebroids}\label{sec:hopfalg}

In this section, we define the notions of bialgebroid and Hopf algebroid. We also collect some useful consequences and remarks. Let $k$ be a field and let $A$ be a finite-dimensional $k$-algebra.

\begin{defn}[{\cite[Definition 2.1]{BS04}}]
    Let $H$ be a $k$-algebra. Then $H$ is a \emph{bialgebroid} over the \emph{base algebra} $A$ if it is equipped with
    \begin{itemize}
        \item two $k$-algebra maps $\alpha : A \ra H$, $\beta: A^\op \ra H$ satisfying 
        \begin{equation}
            \alpha(a)\beta(b) = \beta(b) \alpha(a).
        \end{equation}
        These endow $H$ with the $A$-bimodule structure 
        \begin{equation}
            a \cdot h \cdot b = \alpha(a) \beta(b) h.
        \end{equation}
        \item a coassociative comultiplication $\Delta : H \ra H \ot_A H$, where $\ot_A$ is the tensor product of $A$-bimodules, and a counit $\eps : H \ra A$, which are unital maps which satisfy
        \begin{equation}
            \begin{aligned}
            &\Delta(\alpha(a)\beta(b) h) = \alpha(a) h_1 \ot_A \beta(b) h_2 ,  \\ 
        &\Delta(h h') = \Delta(h)\Delta(h'),  \\ 
        & h_1 \beta(a) \ot_A h_2 = h_1 \ot_A h_2 \alpha(a),  \\
        &\eps(\alpha(a) \beta(b) h ) =  a \eps(h) b ,  \\
        &\eps(h h') = \eps(h \alpha(\eps(h'))) = \eps( h \beta(\eps(h'))),  \\
        &\alpha(\eps(h_1))h_2 = \eps(h_1) \cdot h_2 =  h = h_1 \cdot \eps(h_2) = \beta(\eps(h_2))h_1.
    \end{aligned}
    \end{equation}
    where we have used Sweedler notation to write $\Delta(h) = h_1 \ot_A h_2$.
    \end{itemize}   
\end{defn}

\begin{defn}[{\cite[Definition 4.1]{BS04}}]
Let $H$ be a bialgebroid with base algebra $A$. Then $H$ is a \emph{Hopf algebroid} with base algebra $A$ if it is equipped with an invertible anti-automorphism $S: H \ra H$, called the \emph{antipode}, satisfying 
$S \circ \beta = \alpha$, and
\begin{equation}\label{eq:antipode}
     \begin{aligned}
         S(h_1)_1 h_2 \ot_A S(h_1)_2 &= 1_H \ot_A S(h),  \\
         (S^{-1} h_2)_1 \ot_A (S^{-1} h_2)_2 h_1 &= S^{-1} (h) \ot_A 1_H .
\end{aligned}
\end{equation}
\end{defn}

\begin{rmk}
    The definition of a full Hopf algebroid, or Hopf algebroid with invertible antipode has many equivalent formulations \cite[Proposition 4.2]{BS04}, one of which is a pair of a left and right bialgebroid, such that the antipode maps between them. We choose the definition above as it will be more productive in computing the action on certain modules later.
\end{rmk}


\begin{rmk}
\begin{itemize}
    \item A bialgebroid $H$ acts on its base algebra
    $A$ by 
    \begin{equation}
        h \cdot a = \eps(h \alpha(a)) = \eps(h \beta(a)).
    \end{equation}
    \item Every $H$-module is an $A$-$A$-bimodule with the action
    \begin{equation}
        a \cdot m \cdot b = (\alpha(a) \beta(b)) \cdot m.
    \end{equation}
    \item The maps $\alpha$ and $\beta$ also endow $H$ with a second $A$-bimodule structure
    \begin{equation}
        a \cdot h \cdot b = h \alpha(a)\beta(b) .
    \end{equation}
\end{itemize}
\end{rmk}

\section{Closed structure} \label{sec:closed}
In this section, we prove that the left/right internal Hom functors for modules over the Hopf algebroid can be expressed in terms of the internal Homs of left/right modules over the base algebra. This is already known - see \cite{Sch00}. We write down the isomorphisms explicitly in order to present the module structure of the internal Hom, for later use.

\begin{defn}
    Let $\cee$ be a monoidal category. $\cee$ is called left/right \emph{closed} if it possesses a left/right internal Hom, which is a functor 
    \begin{equation}
        [-,-]^{r/l} : \cee^\op \times \cee \ra \cee,
    \end{equation}
    such that there are the following natural isomorphisms:
    \begin{equation}
         \homgrp{X}{[Y,Z]^r} \cong \homgrp{X \ot Y}{Z} \cong \homgrp{Y}{[X,Z]^l}.
    \end{equation}
\end{defn}

\begin{prop}
    Let $M$, $N$ be $H$-modules. The category of $H$-modules is right and left closed, with
    \begin{equation}
    \begin{aligned}
        [M, N]^r &= \Hom_H(H \ot_A M, N) , \\
        [M, N]^l &= \Hom_H(M \ot_A H, N) ,
    \end{aligned}
    \end{equation}
    with left action of $H$ given by right multiplication on $H$ in $H \ot_A M$ and $M \ot_A H$, where the tensor products are in the category of $H$-modules.
\end{prop}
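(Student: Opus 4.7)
The plan is to write down explicit natural bijections realising the adjunctions $\Hom_H(X \ot_A M, N) \cong \Hom_H(X, [M,N]^r)$ and $\Hom_H(M \ot_A X, N) \cong \Hom_H(X, [M,N]^l)$, and check they respect the $H$-actions. I focus on the right internal Hom; the left case is entirely parallel. The first step is to verify that $H \ot_A M$ carries two commuting $H$-actions: the left \emph{diagonal} action $h \cdot (k \ot_A m) = h_1 k \ot_A h_2 m$, with respect to which the outer $\Hom_H$ is taken, and a right action by right multiplication on the first factor, $(k \ot_A m) \cdot h = kh \ot_A m$. Well-definedness of the diagonal action over $\ot_A$ relies on the bialgebroid axiom $h_1 \beta(a) \ot_A h_2 = h_1 \ot_A h_2 \alpha(a)$, the right action is immediate from the balancing $\beta(a) k \ot_A m = k \ot_A \alpha(a) m$ and associativity in $H$, and the two actions commute by inspection. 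Hence $[M,N]^r = \Hom_H(H \ot_A M, N)$ inherits the left $H$-action $(h \cdot \phi)(k \ot_A m) = \phi(kh \ot_A m)$ from the right $H$-action on its source.

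The second step is to define the adjunction bijection explicitly by
$$\Phi(\phi)(x \ot_A m) = \phi(x)(1_H \ot_A m), \qquad \Psi(\psi)(x)(k \ot_A m) = \psi(kx \ot_A m).$$
Well-definedness of each over $\ot_A$ uses the balancing $\beta(a) k \ot_A m = k \ot_A \alpha(a) m$ on both $H \ot_A M$ and $X \ot_A M$, together with the commutation $\alpha(a)\beta(b) = \beta(b)\alpha(a)$. The $H$-linearity of $\Psi(\psi)(x)$ with respect to the diagonal action reduces to the identity $\psi(h_1 kx \ot_A h_2 m) = \psi(h \cdot (kx \ot_A m)) = h \cdot \psi(kx \ot_A m)$, using $H$-linearity of $\psi$; the $H$-linearity of $\Psi(\psi): X \to [M,N]^r$ is direct from the definitions. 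The composites $\Phi \Psi$ and $\Psi \Phi$ reduce to the unitality $1_H \cdot x = x$ and $H$-linearity of the inputs. Naturality in $X$ is manifest from the explicit formulas.

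For $[M,N]^l$ the construction is parallel, with $M \ot_A H$ carrying the diagonal left action and right multiplication on the second factor, giving the bijection $\phi \mapsto (m \ot_A x \mapsto \phi(x)(m \ot_A 1_H))$ with inverse $\psi \mapsto (x \mapsto (m \ot_A k \mapsto \psi(m \ot_A kx)))$. The main obstacle is not a single hard idea but the careful bookkeeping of the four $A$-actions on $H$ induced by $\alpha$ and $\beta$ when verifying the various balancings, and confirming that the auxiliary right $H$-action used to define the module structure on the internal Hom genuinely commutes with the diagonal action being quotiented over. Once these are disentangled using the bialgebroid axioms --- in particular $\alpha(a)\beta(b) = \beta(b)\alpha(a)$ and the compatibility of $\Delta$ with the bimodule structure --- the adjunction identities become routine.
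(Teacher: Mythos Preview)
Your proof is correct, and in fact more informative than the paper's own argument, which simply observes that a full Hopf algebroid is in particular a left Hopf algebroid and cites Schauenburg's result that the module category of a left Hopf algebroid is closed. Your direct construction of the adjunction bijections $\Phi$ and $\Psi$ works at the level of the bialgebroid axioms alone --- the antipode never appears --- which makes transparent that closedness is really a bialgebroid-level phenomenon, not a Hopf one. What the paper's citation-based route buys is brevity and a pointer to the literature where the further identification $[M,N]^r \cong \Hom_{A^\op}(M,N)$ is carried out (that step genuinely uses the left Hopf structure, via the Galois/fusion map); what your route buys is self-containment and an explicit description of the commuting diagonal and right-regular $H$-actions on $H \ot_A M$, which is exactly the structure the paper goes on to exploit in \cref{thm:fusion} and \cref{thm:actions}. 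The only place to be slightly more careful in a full write-up is the well-definedness of the diagonal action over $\ot_A$: one needs that $\Delta$ lands in the Takeuchi product, so that the relation $h_1\beta(a)\ot_A h_2 = h_1\ot_A h_2\alpha(a)$ can be invoked for arbitrary representatives, but you have correctly identified this as the relevant axiom.
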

\begin{proof}
    Our definition of Hopf algebroid is in particular a left Hopf algebroid \cite[Proposition 4.2]{BS04}. Left Hopf algebroids are known to be closed, with this internal Hom functor - for the right closed case, see \cite[Proposition 3.3]{Sch00}, see also \cite[Section 4.6.2]{Boh09}.
\end{proof}

\begin{lem}\label{thm:fusion}
Let $M$ be a left $H$-module. Then the module $H \ot_A M$, with the 
following action of $H$
\begin{equation}
    h \cdot (h' \ot_A m) =   h_1 h' \ot_A h_2 \cdot m,
\end{equation}
is isomorphic to $H\ot_{A^\op} M$, with left multiplication on $H$
\begin{equation}
    h \cdot (h' \ot_{A^\op} m) = h h' \ot_{A^\op} m .
\end{equation}
Here, the symbol $\ot_{A^\op}$ means the bimodule tensor product over the right action of $A^{\op}$ on $H$ given by $h \triangleleft a = h \beta(a)$ and the left action of $A^{\op}$ on $M$ given by $a \triangleright m = \beta(a) \cdot m$. That is, we have
\begin{equation}
      h \beta(a) \ot_{A^\op} m = h \ot_{A^\op} \beta(a) \cdot m = h \ot_{A^\op}  m \cdot a.
\end{equation}

Similarly $M \ot_A H$ and $M \ot_{A^\op} H$, equipped with the following actions, are isomorphic.
\begin{equation}
    \begin{aligned}
    h \cdot (m \ot_A h') &=   h_1 \cdot m \ot_A h_2 h', \\
    h \cdot (m \ot_{A^\op} h') &= m \ot_{A^\op} hh' ,
\end{aligned}
\end{equation}
where this bimodule tensor product is over the right action of $A^\op$ on $M$ given by $m \blacktriangleleft a = \alpha(a) \cdot m $ and the left action of $A^\op$ on $H$ given by $a \blacktriangleright h = h \alpha(a)$, so
\begin{equation}
    m \ot_{A^\op} \alpha(a) h = \alpha(a) \cdot m \ot_{A^\op} h = a \cdot  m \ot_{A^\op}  h.
\end{equation}
\end{lem}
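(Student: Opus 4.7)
The plan is to exhibit an explicit $H$-module isomorphism built from the antipode. For the first claim, define
\[
\phi \colon H \otimes_{A^{\op}} M \longrightarrow H \otimes_A M, \qquad h \otimes_{A^{\op}} m \longmapsto h_1 \otimes_A h_2 \cdot m,
\]
with candidate inverse
\[
\psi \colon H \otimes_A M \longrightarrow H \otimes_{A^{\op}} M, \qquad h \otimes_A m \longmapsto h_1 \otimes_{A^{\op}} S(h_2) \cdot m.
\]

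I would first verify well-definedness of $\phi$ on the coequaliser. The axiom $\Delta(\alpha(c)\beta(d)h) = \alpha(c) h_1 \otimes_A \beta(d) h_2$ specialises at $h = 1$, $c = 1$ to $\Delta(\beta(a)) = 1 \otimes_A \beta(a)$, so multiplicativity of $\Delta$ gives $\Delta(h\beta(a)) = h_1 \otimes_A h_2\beta(a)$. Consequently $\phi(h\beta(a) \otimes_{A^{\op}} m)$ and $\phi(h \otimes_{A^{\op}} \beta(a) m)$ both equal $h_1 \otimes_A h_2\beta(a) m$. The $H$-linearity of $\phi$ follows immediately from $\Delta(hh') = \Delta(h)\Delta(h')$. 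Well-definedness of $\psi$ uses the dual computation $\Delta(\beta(a) h) = h_1 \otimes_A \beta(a) h_2$ together with $S \circ \beta = \alpha$ and the anti-multiplicativity of $S$: both $\psi(\beta(a) h \otimes_A m)$ and $\psi(h \otimes_A \alpha(a) m)$ evaluate to $h_1 \otimes_{A^{\op}} S(h_2)\alpha(a) m$.

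The main task is to verify $\phi \circ \psi = \id$ and $\psi \circ \phi = \id$. By coassociativity, these reduce to the identities
\[
h_1 \otimes_A h_2 S(h_3) = h \otimes_A 1_H \qquad \text{and} \qquad h_1 \otimes_{A^{\op}} S(h_2) h_3 = h \otimes_{A^{\op}} 1_H,
\]
where $\Delta^2 h = h_1 \otimes h_2 \otimes h_3$. Both of these are the Hopf-algebroid incarnations of the classical antipode/counit identities and follow from the two antipode axioms of \eqref{eq:antipode}, after unpacking the composite $\Delta \circ S$, which in a full Hopf algebroid interchanges the two bimodule tensor products in a controlled way.

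The symmetric statement relating $M \otimes_A H$ and $M \otimes_{A^{\op}} H$ proceeds by the parallel construction: the candidate isomorphism is $\phi'(m \otimes_{A^{\op}} h) = h_1 \cdot m \otimes_A h_2$, with inverse $\psi'(m \otimes_A h) = S^{-1}(h_1) \cdot m \otimes_{A^{\op}} h_2$. Well-definedness of $\psi'$ uses $S^{-1} \circ \alpha = \beta$ (equivalent to $S \circ \beta = \alpha$ by the bijectivity of $S$), and the verification that $\phi'$ and $\psi'$ are mutually inverse uses the second antipode identity of \eqref{eq:antipode} in place of the first.

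The main obstacle throughout is careful bookkeeping. Each expression involving Sweedler indices and $S^{\pm 1}$ lives in a specific bimodule tensor product, and each antipode axiom is an equation in a particular such product; one must ensure each manipulation is legal. In particular, verifying the well-definedness of $\psi$ is the most delicate point, since the defining relation of $\otimes_A$ involves $\alpha$ while that of $\otimes_{A^{\op}}$ involves $\beta$, and it is precisely the identity $S \circ \beta = \alpha$ that bridges them.
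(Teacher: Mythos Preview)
Your forward map $\phi$ and the overall strategy coincide with the paper's. The gap is in your candidate inverse. You propose
\[
\psi(h\ot_A m)=h_1\ot_{A^\op}S(h_2)\cdot m,
\]
which is the familiar Hopf-\emph{algebra} translation map. The paper instead uses
\[
\psi(h\ot_A m)=S^{-1}(S(h)_2)\ot_{A^\op}S(h)_1\cdot m,
\]
and similarly $m\ot_A h\mapsto S^{-1}(h)_2\cdot m\ot_{A^\op}S(S^{-1}(h)_1)$ for the second isomorphism. With this choice, checking $\phi\circ\psi=\id$ is \emph{literally} the second line of \eqref{eq:antipode}: setting $k=S(h)$ one gets $(S^{-1}k_2)_1\ot_A(S^{-1}k_2)_2k_1=S^{-1}(k)\ot_A1_H$. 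Likewise $\psi\circ\phi=\id$ unwinds to the first line of \eqref{eq:antipode}. No further identities are needed.

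Your formula, by contrast, reduces $\phi\circ\psi=\id$ to $h_1\ot_A h_2S(h_3)=h\ot_A1_H$ and $\psi\circ\phi=\id$ to $h_1\ot_{A^\op}S(h_2)h_3=h\ot_{A^\op}1_H$. Neither of these is one of the axioms in \eqref{eq:antipode}, and your remark that they ``follow \dots\ after unpacking the composite $\Delta\circ S$'' is exactly where the difficulty lies: in a Hopf algebroid $S$ is \emph{not} an anti-coalgebra endomorphism for the left comultiplication $\Delta$. Rather, $S$ exchanges the left bialgebroid structure with an (implicit) right one, so $S(h)_1\ot S(h)_2$ bears no simple relation to $S(h_2)\ot S(h_1)$. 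In particular the two candidate inverses are genuinely different expressions, and the equality $h_1\ot_{A^\op}S(h_2)=S^{-1}(S(h)_2)\ot_{A^\op}S(h)_1$ would itself require proof. The paper's choice of inverse is precisely engineered to avoid this issue; yours inherits it.
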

\begin{proof}
The first isomorphism is given by
\begin{equation}
    \begin{aligned}
H \ot_{A^\op} M & \cong H \ot_{A} M  \\
    h \ot_{A^\op} m &\mapsto h_1 \ot_{A} h_2 \cdot m  \\
    S^{-1}(S(h)_2) \ot_{A^\op} S(h)_1 \cdot m &\mapsfrom h \ot_{A} m,
\end{aligned}
\end{equation}

We can see that this is an $H$-module homomorphism by
\begin{equation}
    (hh')_1 \ot_{A} (hh')_2 \cdot m = h_1 h'_1 \ot_A h_2 h'_2 \cdot m .
\end{equation}
Similarly, for the second isomorphism we have
\begin{equation}
    \begin{aligned}
M \ot_{A^\op} H & \cong M \ot_{A} H  \\
    m \ot_{A^\op} h &\mapsto h_1  \cdot m \ot_{A} h_2  \\
    S^{-1}(h)_2  \cdot m \ot_{A^\op} S(S^{-1}(h)_1) &\mapsfrom m \ot_{A} h, 
\end{aligned}
\end{equation}
Note that these isomorphisms are essentially the same as those denoted by $\alpha$ and $\beta$ in {\cite[Proposition 4.2]{BS04}}.
The tensor products over $A^\op$ are required for these isomoprhisms to be well defined. For example
\begin{equation}
    h \beta(a) \ot_{A^\op} m \mapsto h_1 \ot_A (h_2 \beta(a)) \cdot m = h_1 \ot_A h_2 \cdot (\beta(a) \cdot m) \mapsfrom h \ot_{A^\op} \beta(a) \cdot m.
\end{equation}
\end{proof}


\begin{prop}\label{thm:actions}
The following are isomorphic, as $H$-modules:
\begin{equation}
    \begin{aligned}
\Hom_H(H \ot_A M, N) &\cong \Hom_H(H \ot_{A^\op} M, N) \cong \Hom_{A^{\op}}(M, N), \\
\Hom_H(M \ot_A H, N) &\cong \Hom_H(M \ot_{A^\op} H, N) \cong \Hom_A(M, N),
\end{aligned}
\end{equation}
where the $H$-actions on $\Hom_{A^{\op}}(M,N)$ and $\Hom_A(M,N)$ are given by
\begin{equation}
    \begin{aligned}
    (h \cdot f)(m) &= S^{-1}(S(h)_2) \cdot f( S(h)_1 \cdot m),  \\
    (h \cdot f)(m) &= S(S^{-1}(h)_1) \cdot f( S^{-1}(h)_2 \cdot m),
\end{aligned}
\end{equation}
respectively. Here, a subscript $A$/$A^{\op}$ on the hom space denotes left/right $A$-module homomorphisms, respectively. 
\end{prop}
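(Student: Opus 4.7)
The plan is to factor each claimed chain of isomorphisms into two standard steps — the fusion isomorphism from \cref{thm:fusion} and an induction–restriction adjunction — and then read off the transported $H$-action explicitly.

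For the first chain, the isomorphism $\Hom_H(H \ot_A M, N) \cong \Hom_H(H \ot_{A^\op} M, N)$ is obtained by precomposing with the $H$-module isomorphism $H \ot_{A^\op} M \cong H \ot_A M$ of \cref{thm:fusion}. For the second isomorphism $\Hom_H(H \ot_{A^\op} M, N) \cong \Hom_{A^\op}(M, N)$, I exploit the fact that the $H$-action on $H \ot_{A^\op} M$ is purely by left multiplication on the $H$-factor. Consequently any $H$-linear $\phi$ satisfies $\phi(h \ot m) = h \cdot \phi(1 \ot m)$, so $\phi$ is determined by $g(m) := \phi(1 \ot_{A^\op} m)$; the balance $\beta(a) \ot m = 1 \ot \beta(a) m$ together with $H$-linearity forces $g$ to be right $A$-linear. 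The inverse assignment sends $g$ to $h \ot m \mapsto h \cdot g(m)$, which is well-defined by right $A$-linearity of $g$ and the definition of $\ot_{A^\op}$.

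To identify the transported $H$-action on $\Hom_{A^\op}(M, N)$, I trace $f \in \Hom_{A^\op}(M, N)$ through both isomorphisms. Using the explicit inverse fusion map from \cref{thm:fusion}, the corresponding element $F \in \Hom_H(H \ot_A M, N)$ is
\begin{equation*}
F(h \ot_A m) = S^{-1}(S(h)_2) \cdot f(S(h)_1 \cdot m),
\end{equation*}
and the inverse direction reads off $f(m) = F(1 \ot_A m)$, using that $\Delta(1) = 1 \ot_A 1$. The $H$-action on $\Hom_H(H \ot_A M, N)$ is $(h \cdot F)(h' \ot m) = F(h' h \ot m)$ by the preceding proposition, so pulling back to $\Hom_{A^\op}(M, N)$ gives $(h \cdot f)(m) = (h \cdot F)(1 \ot m) = F(h \ot m)$, which is exactly the stated formula.

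The second chain, involving $M \ot_A H$ and $\Hom_A(M, N)$, is handled symmetrically: apply the second part of \cref{thm:fusion}, the induction–restriction adjunction along $\alpha : A \to H$, and read off the transported action from the other inverse fusion formula, yielding $(h \cdot f)(m) = S(S^{-1}(h)_1) \cdot f(S^{-1}(h)_2 \cdot m)$. The main obstacle is essentially bookkeeping: keeping track of which side of $A$ each module structure lives on, verifying well-definedness over the various bimodule tensor products, and confirming that the resulting formulas assemble into an honest left $H$-action. For these I expect the antipode identities in \cref{eq:antipode}, the identity $S \circ \beta = \alpha$, and the multiplicativity of $\Delta$ to be exactly what is needed.
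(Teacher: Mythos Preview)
Your proposal is correct and follows essentially the same route as the paper: invoke the fusion isomorphism of \cref{thm:fusion} for the first step, use the evaluation-at-$1$/extension-by-$H$ adjunction for the second, and then transport the right-multiplication $H$-action through the composite to obtain the stated formulas. The only difference is one of presentation --- you spell out the transported action via the explicit inverse fusion map, whereas the paper merely states that the action is ``calculated by applying the isomorphisms to the actions on the left-hand modules''.
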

\begin{proof}
For the right closed case, see \cite[Theorem and Definition 3.5]{Sch00}. 
The first isomorphisms of each line follow from \cref{thm:fusion}, and transforms the diagonal action on $H \ot_A M$ to the action on $H$ only. 
The second isomorphisms are given by
\begin{equation}
    \begin{aligned}
\Hom_H(H \ot_{A^\op} M, N) &\cong \Hom_{A^{\op}}(M, N),  \\
f &\mapsto [m \mapsto f(1_H \ot_{A^\op} m)],  \\
[h \ot_{A^\op} m \mapsto h \cdot g(m)] & \mapsfrom g  .
\end{aligned}
\end{equation}
We check that both directions give the appropriate types of homomorphisms.
\begin{equation}
    \begin{aligned}
    &g(m) \cdot a = \beta(a) \cdot f(1_H \ot m) = f(\beta(a) \ot m)  = g(m \cdot a),  \\
   & h \cdot f(h' \ot m) = h h' \cdot g(m) = f(hh' \ot m) = f(h \cdot (h' \ot m)) .
\end{aligned}
\end{equation}
Similarly, we have
\begin{equation}
\begin{aligned}
\Hom_H(M \ot_{A^\op} H, N) &\cong \Hom_A(M, N),  \\
f &\mapsto [m \mapsto f(m \ot_{A^\op} 1_H )],  \\
[m \ot_{A^\op} h \mapsto h \cdot g(m)] & \mapsfrom g  .
\end{aligned}
\end{equation}
Again we check
\begin{equation}
    \begin{aligned}
    &a \cdot g(m) =  \alpha(a) \cdot f(m \ot 1_H) = f(m \ot \alpha(a)) = f( a \cdot m \ot 1_H) = g( a \cdot m),  \\
    &h \cdot f(m \ot h') = h h' \cdot g(m) = f(m \ot hh') = f(h \cdot (m \ot h')).
\end{aligned}
\end{equation}
The $H$-actions are calculated by applying the isomorphisms to the actions on the left-hand modules.
\end{proof}

\section{Duality structure} \label{sec:dual}

In this section, we prove that the internal Hom from a given module to the dual of the base algebra is isomorphic to the dual vector space of the original module, with action given by the antipode. Finally, we use this fact to prove that the category of finite-dimensional modules over a Hopf algebroid with bijective antipode is \gv{}.

\begin{prop}
    The vector space $A^* = \Hom_k(A, k)$ can be equipped with the following module structures, for $a,b,c \in A$, $h \in H$, $f \in A^*$:
    \begin{itemize}
        \item $A$-bimodule with action 
        \begin{equation}
        (c \cdot f \cdot a)(b) = f(abc),
        \end{equation}
        \item $H$-module with action 
        \begin{equation}\label{eq:hact}
            (h \cdot f)(a) = f(S(h) \cdot a) = f(S^{-1}(h) \cdot a).
        \end{equation}
    \end{itemize}
\end{prop}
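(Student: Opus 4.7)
The plan is to verify the two claims—the $A$-bimodule and $H$-module structures on $A^*$—separately, with the real work lying in showing the two formulas for the $H$-action coincide. The $A$-bimodule structure is straightforward: the action $(c \cdot f \cdot a)(b) = f(abc)$ is just the linear dual of the regular $A$-$A$-bimodule structure on $A$, and the bimodule axioms transfer directly from associativity of multiplication in $A$.

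For the $H$-module structure, I would use that $A$ itself is a left $H$-module via $h \cdot a = \eps(h\alpha(a)) = \eps(h\beta(a))$, and that $S$ (equivalently $S^{-1}$) is an anti-algebra automorphism of $H$. Then
\begin{equation*}
((hh') \cdot f)(a) = f(S(hh') \cdot a) = f(S(h')S(h) \cdot a) = f(S(h') \cdot (S(h) \cdot a)) = (h \cdot (h' \cdot f))(a),
\end{equation*}
so $(h \cdot f)(a) = f(S(h) \cdot a)$ defines a left $H$-action on $A^*$, and the same calculation with $S^{-1}$ in place of $S$ handles the second formula.

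The harder step is verifying these two actions coincide. Since $A^*$ separates points, this reduces to the identity $\eps(S(h)\alpha(a)) = \eps(S^{-1}(h)\alpha(a))$ in $A$ for all $h \in H$ and $a \in A$. Using $\alpha = S\circ\beta$ and $\beta = S^{-1}\circ\alpha$, combined with $\eps(x\alpha(a)) = \eps(x\beta(a))$ (which expresses the two equivalent formulas for the $H$-action on $A$), one can absorb $\alpha(a)$ into $h$ and reduce the general case to the special case $a = 1$, namely $\eps \circ S = \eps \circ S^{-1}$ on $H$. I expect this last identity to be the main obstacle; it should follow from pairing the two antipode axioms \cref{eq:antipode}, applying the counit to each tensor factor, and collapsing the resulting Sweedler expressions using the counit axioms.
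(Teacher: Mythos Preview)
Your first two steps are fine: the $A$-bimodule structure on $A^*$ is routine, and the verification that each of $(h\cdot f)(a)=f(S(h)\cdot a)$ and $(h\cdot f)(a)=f(S^{-1}(h)\cdot a)$ separately defines a left $H$-action is exactly the anti-automorphism argument you give.

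The gap is in the third step. You reduce the coincidence of the two actions to the identity $\eps\circ S=\eps\circ S^{-1}$ on $H$, and expect this to fall out of the antipode axioms. In a general Hopf algebroid this identity is \emph{not} available: what one actually gets is $\eps\circ S=\phi\circ\eps\circ S^{-1}$ (equivalently $\eps\circ S^{2}=\phi\circ\eps$) for a certain algebra automorphism $\phi:A\to A$ which need not be the identity. Concretely, the left bialgebroid underlying $H$ can be completed to a full Hopf algebroid via two different right bialgebroid structures; these are isomorphic, but by a map that is the identity on $H$ and $\phi$ on the base. From $S\alpha=\beta\phi$ and $\phi\,\eps S^{-1}=\eps S$ one deduces $S^{2}\beta=\beta\phi$ and $\eps S^{2}=\phi\eps$, and then a direct computation gives
\[
\phi(h\cdot a)=\phi\bigl(\eps(h\beta(a))\bigr)=\eps\bigl(S^{2}(h)\,\beta(\phi(a))\bigr)=S^{2}(h)\cdot\phi(a).
\]
Thus $\phi$ is an $H$-module isomorphism between $A$ with its standard action and $A$ with the $S^{2}$-twisted action, and dually the two $H$-actions on $A^{*}$ you wrote down agree \emph{up to this isomorphism}. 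That is precisely what the paper proves, and it is all that is needed downstream (either action makes $A^{*}$ a dualising object). Your strategy of proving literal equality via $\eps S=\eps S^{-1}$ would require $\phi=\id_A$, which you cannot expect in general; the antipode-axiom manipulation you sketch will at best reproduce $\phi$ rather than eliminate it.
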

\begin{proof}
The only nontrivial claim in this proposition is the equality of the actions with $S$ and $S^{-1}$. In \cite[Section 2.6.8]{Kow09}, two different right bialgebroid structures are presented, which, together with the left bialgebroid and antipode, produce isomorphic full Hopf algebroids. By \cite[Proposition 4.3]{BS04}, these differ by an isomorphism which is trivial on the algebra $H$. We can deduce that there is an isomorphism $\phi : A \ra A$ such that
    \begin{equation}
        S \alpha = \beta \phi, \quad \phi \eps S^{-1} = \eps S \ \implies \ S^2 \beta = \beta \phi, \quad \eps S^2 = \phi \eps.
    \end{equation}
    This isomorphism yields an isomorphism of modules, between $A$ with the action of $h$, and $A$ with the action of $S^2(h)$.
    \begin{equation}
        \begin{aligned}
        \phi(h \cdot a) &= \phi(\eps(h \beta(a))) = \eps(S^2(h \beta(a))) = \eps(S^2(h) S^2(\beta(a))) \\ &= \eps(S^2(h) \beta(\phi(a))) =  h \cdot \phi(a).
    \end{aligned}
    \end{equation}
\end{proof}

Next we prove that the $k$-linear dual $M^*$ of any Hopf algebroid module $M$ inherits an action of $H$, from its realisation as both a left and right internal hom in the category of $H$-modules.
\begin{lem} 
The following are isomorphic, as vector spaces
\begin{equation}
    \begin{aligned}
M^* = \Hom_k(M, k) \cong \Hom_{A^\op}(M, A^*)  \\
f \mapsto [\phi_f(m) : a \mapsto f(m \cdot a)],
\end{aligned}
\end{equation}
Further, inducing the first action from \cref{thm:actions}, with $N = A^*$,
yields the following action of $H$ on $M^*$:
\begin{equation}
    (h \cdot f)(m) = f(S(h) \cdot m).
\end{equation}
\end{lem}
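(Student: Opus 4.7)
The plan is to establish the vector space isomorphism by writing down an explicit inverse, and then transport the $H$-action via \cref{thm:actions}, collapsing it with the counit identity.

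For the vector space isomorphism, I would first check that $f \mapsto \phi_f$ really lands in $\Hom_{A^\op}(M, A^*)$. The element $\phi_f(m)$ is $k$-linear as the composite of $a \mapsto m \cdot a$ with $f$; and $\phi_f$ is right $A$-linear because the right $A$-action on $A^*$ is $(g \cdot a)(b) = g(ab)$, so $\phi_f(m \cdot b)(a) = f(m \cdot (ba)) = \phi_f(m)(ba) = (\phi_f(m) \cdot b)(a)$. For the inverse, I send $g \in \Hom_{A^\op}(M, A^*)$ to $f_g(m) := g(m)(1_A)$. The two compositions are identities by one-line checks: $\phi_{f_g}(m)(a) = g(m \cdot a)(1_A) = (g(m) \cdot a)(1_A) = g(m)(a)$, and $f_{\phi_f}(m) = \phi_f(m)(1_A) = f(m)$.

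The main computation is transporting the action. By \cref{thm:actions}, the induced action on $\Hom_{A^\op}(M, A^*)$ reads
\begin{equation}
(h \cdot \phi_f)(m) = S^{-1}(S(h)_2) \cdot \phi_f(S(h)_1 \cdot m),
\end{equation}
and the element of $M^*$ under the above isomorphism is obtained by evaluating at $1_A$. Two steps collapse this. First, using the $H$-action on $A^*$, $(h' \cdot g)(a) = g(S(h') \cdot a)$, together with $S(S^{-1}(S(h)_2)) = S(h)_2$ and the base action $S(h)_2 \cdot 1_A = \eps(S(h)_2 \alpha(1_A)) = \eps(S(h)_2)$, the evaluation reduces to $\phi_f(S(h)_1 \cdot m)(\eps(S(h)_2))$. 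Second, unfolding $\phi_f$ and the embedded right $A$-action $m \cdot a = \beta(a) \cdot m$ rewrites this as $f((\beta(\eps(S(h)_2)) S(h)_1) \cdot m)$, and the counit identity $\beta(\eps(k_2)) k_1 = k$ applied with $k = S(h)$ delivers the desired $f(S(h) \cdot m)$.

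The only real obstacle is bookkeeping: one must apply the $H$-action on $A^*$, the induced $A$-bimodule action on $M$ and on $A^*$, and the correct counit identity in the right order, or the Sweedler indices and the factors of $S$ and $S^{-1}$ will not cancel properly. Everything else is formal.
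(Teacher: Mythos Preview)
Your proposal is correct and follows essentially the same route as the paper: the same right $A$-linearity check, the same inverse $g \mapsto g(-)(1_A)$, and the same chain of reductions using the $H$-action on $A^*$ together with the counit identity $\beta(\eps(k_2))k_1 = k$ applied at $k = S(h)$. The only cosmetic difference is that the paper phrases the vector space isomorphism as an instance of tensor--Hom adjunction before writing down the explicit inverse, whereas you verify both composites directly; the computation of the transported $H$-action is line-for-line the same.
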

\begin{proof}
The isomorphism follows from the tensor-Hom adjunction
\begin{equation}
   \Hom_k(A \ot_{A^\op} M, k) \cong \Hom_{A^\op}(M, \Hom_k(A,k)).
\end{equation}
We check that the forward direction yields an $A$-module homomorphism.
 \begin{equation}
    \phi_{f}(m \cdot b)(a)  = f(m \cdot ba) = \phi_f(m)(ba) = (\phi_f (m) \cdot b)(a).
 \end{equation}
For the reverse direction, one can take $\phi \mapsto [m \mapsto \phi(m)(1) \in k]$. 
Then the action of $H$ on $M^*$ is defined by
\begin{equation}
    \begin{aligned}
(h \cdot f)(m) 
&= S^{-1}(S(h)_2) \cdot \phi_f( S(h)_1 \cdot m)(1) = \phi_f( S(h)_1 \cdot m)(S(h)_2 \cdot 1)  \\
&=   f((S(h)_1 \cdot m)) \cdot ( S(h)_2 \cdot 1) )  \\  
    &= f( (S(h)_1 \cdot m) \cdot \eps(S(h)_2)) = f(\beta(\eps(S(h)_2)) S(h)_1 \cdot m)  \\  
    &= f(S(h) \cdot m).
\end{aligned}
\end{equation}
\end{proof}

\begin{lem}
The following are isomorphic, as vector spaces
\begin{equation}
\begin{aligned}
M^* = \Hom_k(M, k) \cong \Hom_A(M, A^*)  \\
f \mapsto [\phi_f(m) : a \mapsto f(a \cdot m)].
\end{aligned}
\end{equation}
Further, inducing the second action from \cref{thm:actions}, with $N=A^*$,
yields the following action of $H$ on $M^*$:
\begin{equation}
    (h \cdot f)(m) = f(S^{-1}(h) \cdot m).
\end{equation}
\end{lem}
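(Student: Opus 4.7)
The plan is to follow the same template as the previous lemma, dualised to the left module side. First, I would obtain the vector-space isomorphism from the tensor-Hom adjunction
\begin{equation}
    \Hom_k(M \ot_A A, k) \cong \Hom_A(M, \Hom_k(A,k)),
\end{equation}
noting that $M \ot_A A \cong M$ canonically, and that the inverse map is the evaluation $\phi \mapsto [m \mapsto \phi(m)(1)]$. I would then check that $\phi_f$ is indeed a left $A$-module homomorphism using the bimodule action on $A^*$: for $a,b\in A$, we have $\phi_f(a\cdot m)(b) = f(ba\cdot m) = \phi_f(m)(ba) = (a\cdot\phi_f(m))(b)$.

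For the $H$-action, I would substitute $N = A^*$ into the second formula of \cref{thm:actions} and transport along the isomorphism. Concretely, writing $g \in M^*$ and $\phi_g \in \Hom_A(M,A^*)$, the induced action is determined by $(h\cdot g)(m) = (h\cdot\phi_g)(m)(1)$, so I would compute
\begin{equation}
    (h\cdot\phi_g)(m)(1) = \bigl(S(S^{-1}(h)_1)\cdot \phi_g(S^{-1}(h)_2 \cdot m)\bigr)(1).
\end{equation}
Then I would apply the $H$-action on $A^*$ in the form $(h'\cdot F)(a) = F(S^{-1}(h')\cdot a)$ to move $S(S^{-1}(h)_1)$ inside, cancelling $S^{-1}\circ S$ to leave $S^{-1}(h)_1 \cdot 1 = \eps(S^{-1}(h)_1)$. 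Expanding $\phi_g$ and using that the action of $a\in A$ on $M$ is given by $\alpha(a)$, I arrive at
\begin{equation}
    (h\cdot g)(m) = g\bigl(\alpha(\eps(S^{-1}(h)_1))\, S^{-1}(h)_2 \cdot m\bigr).
\end{equation}

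Finally, I would invoke the counit axiom $\alpha(\eps(h_1))h_2 = h$ applied to $S^{-1}(h)$ to collapse the expression to $g(S^{-1}(h)\cdot m)$, which is the desired formula. There is no genuine obstacle here — the argument is essentially the mirror image of the preceding lemma — but the most delicate point is bookkeeping: matching the two equivalent forms of the action on $A^*$ (using $S$ versus $S^{-1}$, justified in the earlier proposition) against the action coming out of \cref{thm:actions}, so that the cancellation $S^{-1}\circ S = \id$ actually occurs and the counit axiom applies in the correct factor of the Sweedler decomposition.
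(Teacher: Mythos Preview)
Your proposal is correct and follows essentially the same route as the paper: the tensor--Hom adjunction for the vector-space isomorphism, the check that $\phi_f$ is a left $A$-module map, and then transporting the second action of \cref{thm:actions} along the isomorphism and collapsing via the counit axiom. The only cosmetic difference is that the paper writes the adjunction as $\Hom_k(A\ot_A M,k)\cong\Hom_A(M,A^*)$ rather than $\Hom_k(M\ot_A A,k)$, and your explicit flag that one must use the $S^{-1}$ form of the $H$-action on $A^*$ to obtain the cancellation $S^{-1}\circ S=\id$ is exactly what the paper's computation does implicitly.
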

\begin{proof}
Similarly to the first case, we have
\begin{equation}
   \Hom_k(A \ot_{A} M, k) \cong \Hom_{A}(M, \Hom_k(A,k)).
\end{equation}
We check that
\begin{equation}
    \phi_{f}(b \cdot m)(a)  = f(ab \cdot m) = \phi_f(m)(ab) = (b \cdot \phi_f (m))(a).
\end{equation}
Again, similarly, we have
\begin{equation}
     \begin{aligned}
(h \cdot f)(m) &=  S(S^{-1}(h)_1) \cdot \phi_f( S^{-1}(h)_2 \cdot m)(1)  \\ &=    f((S^{-1}(h)_1 \cdot 1)) \cdot ( S^{-1}(h)_2 \cdot m) )  \\  &= f(  \eps(S^{-1}(h)_1) \cdot(S^{-1}(h)_2 \cdot m) ) = f(\alpha(\eps(S^{-1}(h)_1)) S^{-1}(h)_2 \cdot m)  \\  &= f(S^{-1}(h) \cdot m).
\end{aligned}
\end{equation}
\end{proof}

\begin{defn}{\cite{Bar95, BD13}}
    A \emph{Grothendieck-Verdier} category is a pair $(\cee, K)$, where $\cee$ is a monoidal category and $K \in \cee$, such that there is a natural isomorphism
    \begin{equation}
        \Hom(X \ot Y, K) \cong \Hom(X, D(Y)),
    \end{equation}
    where the contravariant functor $D$ is an anti-equivalence. $K$ is called a dualising object and $D$ a dualising functor.
\end{defn}

\begin{rmk}
    A closed monoidal category $\cee$ is \gv{} if there exists an object $K \in \cee$, such that the internal Hom functor $D = [-,K]^r$ is an anti-equivalence. 
\end{rmk}

We are now in a position to prove the main result.

\begin{thm} \label{thm:main}
    Let $H$ be a Hopf algebroid with finite-dimensional base $k$-algebra $A$ and an invertible antipode $S$.
    Then 
    the category 
    of finite-dimensional $H$-modules 
    is a \gv{} 
    category. A natural choice of dualising object is given by the vector space dual of the base algebra $A^*$, with the following action of $H$:
    \begin{equation}
        (h \cdot f)(a) = f(S(h) \cdot a), \qquad h \in H, \ f \in A^*, \ a \in A.
    \end{equation}
\end{thm}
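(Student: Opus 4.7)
The plan is to apply the Remark immediately preceding the theorem: to exhibit a \gv{} structure on the category of finite-dimensional $H$-modules, it suffices to find an object $K$ for which the right internal Hom functor $D := [-,K]^r$ is an anti-equivalence. The natural candidate is $K = A^*$. Combining the second isomorphism of \cref{thm:actions} with the first lemma of this section, I would obtain a chain of $H$-module isomorphisms
\begin{equation*}
    D(M) = [M, A^*]^r = \Hom_H(H\ot_A M, A^*) \cong \Hom_{A^\op}(M, A^*) \cong M^*,
\end{equation*}
where $M^* = \Hom_k(M,k)$ carries the $H$-action $(h\cdot f)(m) = f(S(h)\cdot m)$. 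Finite-dimensionality of $A$ ensures that $A^*$ lies in the category, so $D$ restricts to an endofunctor of finite-dimensional $H$-modules, and the defining adjunction $\Hom(X \ot Y, A^*) \cong \Hom(X, D(Y))$ is immediate from the closed structure.

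Next, I would identify a quasi-inverse for $D$. The natural candidate is $\tilde D := [-, A^*]^l$, which by the second lemma of this section is realised on the same underlying vector space $M^*$ but with the twisted action $(h\cdot f)(m) = f(S^{-1}(h)\cdot m)$. The obvious natural transformation to try is the classical finite-dimensional evaluation $\mathrm{ev}_M : M \to (M^*)^*$, $m \mapsto [f \mapsto f(m)]$, which is a $k$-vector space isomorphism by finite-dimensionality of $M$. I would then verify it is an $H$-module map when viewed as $M \to \tilde D(D(M))$: unwinding the two twisted actions yields
\begin{equation*}
    (h \cdot \mathrm{ev}_m)(f) = \mathrm{ev}_m(S^{-1}(h)\cdot f) = (S^{-1}(h) \cdot f)(m) = f(S(S^{-1}(h))\cdot m) = f(h\cdot m) = \mathrm{ev}_{h\cdot m}(f),
\end{equation*}
so the identification hinges precisely on the cancellation $S \circ S^{-1} = \id_H$. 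The symmetric computation, using $S^{-1}\circ S = \id_H$, supplies the other natural isomorphism $D\circ \tilde D \cong \id$.

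I do not anticipate a serious obstacle: the bulk of the work has already been done in \cref{thm:actions} and the two lemmas that follow it, which package the subtlety of the $H$-action on internal Homs. The role of each hypothesis is transparent in the argument: bijectivity of the antipode is what permits $\tilde D$ to be defined on the $k$-dual and what makes both cancellations work; finite-dimensionality of $A$ places $A^*$ inside the category; and finite-dimensionality of $M$ is exactly what makes $\mathrm{ev}_M$ invertible. The main conceptual content is really the identification $D(M) \cong M^*$ with the $S$-twisted action, together with the one-line $H$-linearity check for the canonical evaluation.
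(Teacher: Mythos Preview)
Your proposal is correct and follows essentially the same route as the paper: identify $D=[-,A^*]^r$ with the $k$-linear dual carrying the $S$-twisted action, take $[-,A^*]^l$ with the $S^{-1}$-twisted action as quasi-inverse, and verify that the canonical evaluation $M\to M^{**}$ is $H$-linear via the cancellation $S\circ S^{-1}=\id_H$ (the paper checks the other composite using $S^{-1}\circ S=\id_H$, but this is the same argument). Your added remarks on the role of each hypothesis are accurate and match the structure of the paper's proof.
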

\begin{proof}
    The dualising functor is defined by 
    \begin{equation}
        D = [-,A^*]^r \simeq \Hom_H(H \ot_A (-), A^*) \simeq \Hom_{A^\op}(-, A^*) \simeq \Hom_k (-, k),
    \end{equation}
    with the following $H$ action on $D(M)$, for some $H$-module $M$:
    \begin{equation}
        (h \cdot f)(m) = f(S(h) \cdot m).
    \end{equation}
    The inverse of the dualising functor is defined by
    \begin{equation}
        D^{-1} = [-,A^*]^l \simeq \Hom_H((-) \ot_A H, A^*) \simeq \Hom_A(-, A^*) \simeq \Hom_k (-, k),
    \end{equation}
    with the following $H$-action on $D^{-1}(M)$:
    \begin{equation}
        (h \cdot f)(m) = f(S^{-1}(h) \cdot m).
    \end{equation}
    As we are considering only finite-dimensional vector spaces, we have
    \begin{equation}
        D(D^{-1}(M)) \cong D^{-1}(D(M)) \cong \Hom_k(\Hom_k(M,k), k) \cong M.
    \end{equation}
    In the case $D(D^{-1}(M))$, the action on $\phi_m \in M^{**}$, defined by $\phi_m(f) = f(m)$ is given by
    \begin{equation}
        \begin{aligned}
        (h \cdot \phi_m)(f) &= \phi_m (S(h) \cdot f) = (S(h) \cdot f)(m) \\ &= f(S^{-1} (S (h)) \cdot m) = f(h \cdot m) = \phi_{h \cdot m}(f),
    \end{aligned}
    \end{equation}
    and similarly for $D^{-1} (D(M))$.
    Therefore the action on $M$ is the original one, and $D$ is an anti-equivalence.
\end{proof}

\begin{rmk}
    Hopf algebroids also satisfy the following relation.
    \begin{equation}
        S(S^{-1}(h)_2) \ot S(S^{-1}(h)_1)) = S^{-1}(S(h)_2) \ot S^{-1}( S(h)_1).
    \end{equation}
    This defines a second comultiplication on the Hopf algebroid, which parallels the definition of the second tensor product in a \gv{} category, given by
    \begin{equation}
        X \odot Y = D(D^{-1}Y \ot D^{-1}X) \cong  D^{-1}(DY \ot DX).
    \end{equation}
    In a rigid category these two tensor products are equivalent, just as one can see that for a Hopf algebra, the terms in the first line above coincide with the usual comultiplication.
\end{rmk}

\subsection{Acknowledgements}

Thanks to Tony Zorman for long discussions and proofreading. Thanks to Christoph Schweigert, Simon Wood and everyone at the conference ``Hopf algebroids and non-commutative geometry" at Queen Mary's university, for very helpful discussion and feedback. This
work was supported by the Additional Funding Programme for Mathematical
Sciences, delivered by EPSRC (EP/V521917/1) and the Heilbronn Institute
for Mathematical Research.

\end{document}